\documentclass[a4paper,10pt]{article}
\usepackage[english,frenchb]{babel}
\usepackage[latin1]{inputenc}
\usepackage[T1]{fontenc}
\usepackage{amsmath}
\usepackage{array}
\usepackage{amsthm}
\usepackage{amssymb}
\input xy
\xyoption{all}

\addtolength{\textheight}{3cm}
\addtolength{\voffset}{-1cm}
\addtolength{\textwidth}{3cm}
\addtolength{\hoffset}{-2cm}

\newcommand{\A}{{\mathcal{A}}}
\newcommand{\B}{{\mathcal{B}}}
\newcommand{\C}{{\mathcal{C}}}

\newcommand{\Gr}{{\mathcal{G}r}}

\newcommand{\kk}{{\Bbbk}}

\title{Autour des résultats d'annulation cohomologique de Scorichenko}

\author{Aur\'elien DJAMENT\thanks{CNRS, laboratoire de math\'ematiques Jean Leray (Nantes) ; aurelien.djament@univ-nantes.fr}}

\date{Juillet 2009}

\newtheorem{thm-intro}{Théorème}

\newtheorem{thm}{Théorème}[section]
\newtheorem{pr}[thm]{Proposition}

\newtheorem{lm}[thm]{Lemme}

\theoremstyle{definition}
\newtheorem{defi}[thm]{Définition}
\newtheorem{nota}[thm]{Notation}

\theoremstyle{remark}
\newtheorem{rem}[thm]{Remarque}

\begin{document}

\maketitle

\begin{abstract}
 Le principal objectif de ces notes est de rendre disponible la démonstration de Scorichenko (\cite{Sco}), non publiée, de l'isomorphisme entre homologie des foncteurs et $K$-théorie stable à coefficients polynomiaux sur un anneau quelconque. Plus précisément, on donne la démonstration de l'étape clef (dans sa formulation {\em co}homologique), la plus originale, de l'argument de Scorichenko, qui consiste en un résultat d'annulation (co)homologique général extrêmement frappant. (Franjou et Pirashvili donnent dans  \cite{FP} tous les arguments de la démonstration de Scorichenko, hormis ce qui concerne le résultat d'annulation en question, qui n'y est établi que dans le cas particulier nettement plus simple des anneaux de dimension au plus~$1$.) Nous montrons aussi comment d'autres résultats d'annulation cohomologique connus peuvent être obtenus et généralisés par la même méthode.
\end{abstract}

Dans toute cette note, $\kk$ désigne un anneau commutatif de base fixé. 
On se donne également une catégorie additive $\A$ (essentiellement) petite. 

Si $\C$ est une catégorie (essentiellement) petite, on note $\C-\mathbf{Mod}$ la catégorie des foncteurs de $\C$ vers la catégorie, notée $\mathbf{Mod}$, des $\kk$-modules à gauche.
 (En fait, pour beaucoup de notre propos, la catégorie but $\mathbf{Mod}$ peut être remplacée par une catégorie abélienne assez gentille --- par exemple une catégorie de Grothendieck avec assez de projectifs.)

La catégorie $\C-\mathbf{Mod}$ est abélienne ; c'est une catégorie de Grothendieck (cf. \cite{Gab}, par exemple). Un ensemble de générateurs projectifs en est donné par les $P^\C_c=\kk[{\rm Hom}_\C(c,-)]$ ($\kk$-linéarisation du foncteur ensembliste ${\rm Hom}_\C(c,-)$), où $c$ parcourt les objets de $\C$ (ou un squelette) : on dispose d'un isomorphisme ${\rm Hom}_{\C-\mathbf{Mod}}(P^\C_c,F)\simeq F(c)$ naturel en $c\in {\rm Ob}\,\C$ et $F\in {\rm Ob}\,\C-\mathbf{Mod}$.

\section{Foncteurs polynomiaux}

Pour tout $d\in\mathbb{N}$, on note $t_d$ l'endofoncteur $A\mapsto A^{\oplus (d+1)}$ de $\A$ et $\tau_d$ l'endofoncteur de précomposition par $t_d$ de $\A-\mathbf{Mod}$. Si $I$ est une partie de l'ensemble $\mathbf{d}=\{0,1,\dots,d\}$, on note $u_I$ (resp. $p_I$) la transformation naturelle $id\to t_d$ (resp. $t_d\to id$) donnée par le morphisme $A\to A^{\oplus (d+1)}$ (resp. $A^{\oplus (d+1)}\to A$) dont la $i$-ème composante $A\to A$ est l'identité si $i-1\in I$ et $0$ sinon.

On définit les {\em effets croisés} de degré $d$ comme les transformations naturelles
$$cr_d=\sum_{I\subset\mathbf{d}}(-1)^{|I|}(u_I)_* : id\to\tau_d$$
(où $|I|$ désigne le cardinal de $I$) et
$$cr_d^{op}=\sum_{I\subset\mathbf{d}}(-1)^{|I|}(p_I)_* : \tau_d\to id.$$

On note que le foncteur $t_d$, donc également $\tau_d$, est {\em auto-adjoint}.

Un objet $F$ de $\A-\mathbf{Mod}$ est dit {\em polynomial} de degré au plus $d$ si $cr_d(F)=0$, condition équivalente à $cr_d^{op}(F)=0$ et impliquant $cr_i(F)=0$ pour $i\leq d$. (Cette condition est également équivalente au fait que $\Delta_X^{d+1}(F)=0$ pour tout $X$ dans un ensemble $E$ d'objets de $\A$ tel que tout objet de $\A$ soit facteur direct d'une somme directe finie d'éléments de $E$, où l'on note $\Delta_X$ le noyau de la transformation naturelle de la précomposition par $-\oplus X$ vers l'identité induite par la projection canonique $Y\oplus X\twoheadrightarrow Y$. Cette définition est usuellement employée, avec $E=\{A\}$, lorsque $\A$ est la catégorie des modules à gauche projectifs de type fini sur un anneau $A$, par exemple.)

\begin{nota}
On note $\kappa_d$ le conoyau de $cr_d$. 
\end{nota}

\section{Le critère d'annulation cohomologique de Scorichenko}

Toutes les définitions et propositions de cette section sont dues à Scorichenko (\cite{Sco}).

\begin{defi}
 Un foncteur $F\in {\rm Ob}\,\A-\mathbf{Mod}$ vérifie la condition $(AH)^d_n$ (où $d$ et $n$ sont des entiers naturels) si, pour tout $0\leq i\leq n$, le morphisme $cr_d(\kappa_d^i(F)) : \kappa_d^i(F)\to\tau_d \kappa_d^i(F)$ est injectif (où l'exposant $i$ indique la $i$-ème itération du foncteur $\kappa_d$). Si cette condition est vérifiée pour tout $n$, nous dirons que $F$ satisfait la condition $(AH)^d$.
\end{defi}

\begin{pr}\label{pran1}
 Supposons que $F$ est un foncteur vérifiant la condition $(AH)^d_n$ et $A$ un foncteur polynomial de degré au plus $d$. Alors ${\rm Ext}^i(A,F)=0$ pour $i\leq n$.
\end{pr}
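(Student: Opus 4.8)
The plan is to isolate one key vanishing statement and then to peel $F$ apart along its iterated cokernels $\kappa_d^j(F)$, using $(AH)^d_n$ to control each layer. The heart of the argument is the claim that, for $A$ polynomial of degree at most $d$, the natural map
\[
(cr_d(G))_\ast : {\rm Ext}^i(A,G)\longrightarrow {\rm Ext}^i(A,\tau_d G)
\]
induced by $cr_d(G)\colon G\to\tau_d G$ vanishes for every $i\ge 0$ and every $G$. To prove this I would view both sides as cohomological $\delta$-functors in the variable $G$. The functor $\tau_d$ is exact, being computed objectwise, so $G\mapsto{\rm Ext}^\ast(A,\tau_d G)$ is again a $\delta$-functor; and since $cr_d\colon id\to\tau_d$ is a natural transformation, it gives a morphism of short exact sequences, hence a morphism of $\delta$-functors $\{{\rm Ext}^i(A,-)\}\to\{{\rm Ext}^i(A,\tau_d\,-)\}$. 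Now $\{{\rm Ext}^i(A,-)\}$ is the right derived functor of ${\rm Hom}(A,-)$, hence a \emph{universal} $\delta$-functor, so any morphism out of it is determined by its degree-zero component. In degree zero the map sends $f\colon A\to G$ to $cr_d(G)\circ f=\tau_d(f)\circ cr_d(A)$ by naturality of $cr_d$, and this is zero because $cr_d(A)=0$ expresses that $A$ is polynomial of degree at most $d$. Therefore the whole morphism of $\delta$-functors vanishes.

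Granting this, I fix $i\le n$ and reduce. For each $0\le j\le n$ the condition $(AH)^d_n$ makes $cr_d(\kappa_d^j(F))$ injective, so, $\kappa_d$ being the cokernel of $cr_d$, I obtain a short exact sequence
\[
0\longrightarrow \kappa_d^j(F)\xrightarrow{\,cr_d\,}\tau_d\kappa_d^j(F)\longrightarrow\kappa_d^{j+1}(F)\longrightarrow 0 .
\]
Applying ${\rm Ext}^\ast(A,-)$ and inserting the vanishing just proved, the map ${\rm Ext}^{m+1}(A,\kappa_d^j(F))\to{\rm Ext}^{m+1}(A,\tau_d\kappa_d^j(F))$ in the long exact sequence is zero, so the connecting homomorphism ${\rm Ext}^m(A,\kappa_d^{j+1}(F))\to{\rm Ext}^{m+1}(A,\kappa_d^j(F))$ is surjective; moreover the left-exactness of ${\rm Hom}(A,-)$ together with the same vanishing forces ${\rm Hom}(A,\kappa_d^j(F))=0$. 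Composing these epimorphisms for $j=0,\dots,i-1$ yields a chain
\[
0={\rm Hom}(A,\kappa_d^i(F))\twoheadrightarrow{\rm Ext}^1(A,\kappa_d^{i-1}(F))\twoheadrightarrow\cdots\twoheadrightarrow{\rm Ext}^i(A,\kappa_d^0(F))={\rm Ext}^i(A,F),
\]
every exponent that appears lying in $\{0,\dots,i\}\subset\{0,\dots,n\}$, so that $(AH)^d_n$ legitimately applies at each stage. Since the source is zero, ${\rm Ext}^i(A,F)=0$, as wanted.

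The only genuinely delicate point is the key vanishing of the first paragraph; everything else is formal. I expect the self-adjointness of $\tau_d$ recorded earlier to furnish an alternative proof of that vanishing, by transporting $(cr_d(G))_\ast$ to the first variable, where it should become the map induced by $cr_d^{op}(A)=0$; but the universal $\delta$-functor argument looks the most economical and has the advantage of not requiring one to identify the mate of $cr_d$ explicitly.
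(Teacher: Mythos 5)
Your proof is correct. The global skeleton is the same as the paper's: under $(AH)^d_n$ one has the short exact sequences $0\to\kappa_d^j(F)\to\tau_d\kappa_d^j(F)\to\kappa_d^{j+1}(F)\to 0$, and everything reduces to the vanishing of $(cr_d(G))_*$ on ${\rm Ext}^i(A,-)$; you merely unroll the paper's induction on $n$ into an explicit chain of connecting epimorphisms, which is cosmetic. Where you genuinely diverge is in the proof of that key vanishing. The paper disposes of it in one line via the self-adjointness of $\tau_d$: under the adjunction, $(cr_d(F))_*$ corresponds to $(cr_d^{op}(A))^*$, which is zero because $A$ is polynomial. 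This is shorter, but it silently relies on the identification of the adjoint mate of $cr_d:id\to\tau_d$ as $cr_d^{op}:\tau_d\to id$, which the paper does not verify. Your argument via universal $\delta$-functors avoids computing any mate: you only use that $cr_d$ is a natural transformation (so $(cr_d)_*$ is a morphism of $\delta$-functors, the target one being exact since $\tau_d$ is), that $\{{\rm Ext}^i(A,-)\}$ is universal, and that the degree-zero component is $f\mapsto\tau_d(f)\circ cr_d(A)=0$. The cost is the appeal to universality, i.e.\ effaceability by injectives, which is harmless here since $\A-\mathbf{Mod}$ is a Grothendieck category; the benefit is that the one delicate identification in the paper's proof is replaced by pure formal nonsense, and you even get the statement for arbitrary $G$, not just $F$. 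Your closing remark correctly identifies the adjunction argument as the alternative — it is in fact exactly the paper's proof.
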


\begin{proof}
On raisonne par récurrence sur $n$, supposant la conclusion vérifiée pour les foncteurs satisfaisant $(AH)^d_{n-1}$ (pour $n>0$). Dans la suite exacte longue de cohomologie associée à la suite exacte courte $0\to F\xrightarrow{cr_d(F)}\tau_d F\to\kappa_d F\to 0$, les morphismes
$$cr_d(F)_* :  {\rm Ext}^i(A,F)\to {\rm Ext}^i(A,\tau_d F)$$
sont nuls, puisqu'ils s'identifient par adjonction à
$$(cr^{op}_d(A))^* :  {\rm Ext}^i(A,F)\to {\rm Ext}^i(\tau_d A,F).$$

L'hypothèse de récurrence fournissant ${\rm Ext}^i(A,F)=0$ et ${\rm Ext}^i(A,\kappa_d F)=0$ pour $i<n$, on en déduit bien ${\rm Ext}^n(A,F)=0$ comme souhaité.
\end{proof}

On note $\mathbf{M}(\A)$ (resp. $\mathbf{M}_0(\A)$) la sous-catégorie de $\A$ dont les morphismes sont les monomorphismes scindés (resp. les morphismes nuls et les monomorphismes scindés) de $\A$.

Notons $\theta : \A-\mathbf{Mod}\to\mathbf{M}_0(\A)-\mathbf{Mod}$ le foncteur de restriction.

\begin{defi}
 On dit qu'un foncteur $F$ de $\A-\mathbf{Mod}$ vérifie la condition $(RM)^d$ si le morphisme $\theta(cr_d(F)) : \theta(F)\to\theta(\tau_d(F))$ de $\mathbf{M}_0(\A)-\mathbf{Mod}$ est un monomorphisme scindé.
\end{defi}

\begin{lm}\label{lmanf}
 La condition $(RM)^d$ implique la condition $(AH)^d$.
\end{lm}

\begin{proof}
 Comme le foncteur $\theta$ est fidèle, la condition $(RM)^d$ implique $(AH)^d_0$. Il suffit donc d'établir que si $F$ vérifie $(RM)^d$, alors il en est de même pour $\kappa_d F$. Cela sera une conséquence immédiate du lemme suivant.
\end{proof}

\begin{lm}\label{auxscg}
 \begin{enumerate}
  \item Il existe une transformation naturelle involutive $\gamma : \tau_d^2\to\tau_d^2$ telle que $\gamma\circ\tau_d(cr_d)=cr_d(\tau_d)$. 
  \item Disons qu'un morphisme $f : F\to G$, où $F$ et $G$ sont deux foncteurs vérifiant $(RM)^d$ pour lesquels on a choisi des rétractions $r_F : \theta\tau_d(F)\to\theta F$ et $r_G : \theta\tau_d(G)\to\theta G$ aux effets croisés, est $(RM)^d$-compatible si le diagramme
  $$\xymatrix{\theta\tau_d F\ar[rr]^-{\theta\tau_d(f)}\ar[d]^-{r_F} & & \theta\tau_d G\ar[d]^-{r_G}\\
  \theta F\ar[rr]^-{\theta(f)} & & \theta G
  }$$
  commute.
  
  Le noyau et le conoyau d'un tel morphisme vérifient la propriété $(RM)^d$.
  \item Si $F$ vérifie la propriété $(RM)^d$, il en est de même pour $\tau_d F$ ; de plus on peut choisir les rétractions de sorte que $cr_d(F) : F\to\tau_d F$ soit $(RM)^d$-compatible.
 \end{enumerate}
\end{lm}

\begin{proof} Le foncteur $\tau_d^2$ s'identifie à la précomposition par l'endofoncteur $A\mapsto A\underset{\mathbb{Z}}{\otimes}\mathbb{Z}^{\mathbf{d}}\otimes\mathbb{Z}^{\mathbf{d}}$
de $\A$. On vérifie facilement que la précomposition par l'involution intervertissant les deux derniers facteurs procure la transformation naturelle $\gamma$ souhaitée.

Le deuxième point est immédiat.

Pour la dernière assertion, on constate que le morphisme
$$\theta\tau_d^2 F\xrightarrow{\theta\gamma_F}\theta\tau_d^2 F\xrightarrow{\tau_d(r_F)}\theta F$$
(dans la dernière flèche, on a encore écrit, par abus, $\tau_d$ pour le foncteur analogue défini sur $\mathbf{M}_0(\A)-\mathbf{Mod}$) fournit une rétraction convenable à $\theta cr_d(\tau_d(F))$. La $(RM)^d$-compatibilité provient de ce que $cr_d$ est définie à partir de la précomposition par des morphismes appartenant à $\mathbf{M}_0(\A)$.
\end{proof}

\begin{pr}\label{pranf}
 Soient $F$ et $A$ des objets de $\A-\mathbf{Mod}$. On suppose que $F$ vérifie la condition $(RM)^d$ et que $A$ est polynomial de degré au plus $d$. Alors ${\rm Ext}^*(A,F)=0$.
\end{pr}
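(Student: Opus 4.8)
The plan is to read this statement as the unbounded-degree version of Proposition~\ref{pran1}, obtained simply by feeding the full strength of hypothesis $(RM)^d$ into that proposition through Lemma~\ref{lmanf}. The two earlier results already have exactly the shape required: Lemma~\ref{lmanf} upgrades the hypothesis imposed on $F$, while Proposition~\ref{pran1} delivers the cohomological vanishing in a range of degrees governed by the relevant $n$.

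First I would apply Lemma~\ref{lmanf} to $F$: since $F$ satisfies $(RM)^d$, it satisfies $(AH)^d$, i.e. $F$ satisfies $(AH)^d_n$ for \emph{every} natural number $n$. Then, fixing an arbitrary $n$, I would invoke Proposition~\ref{pran1} with this $F$ (which satisfies $(AH)^d_n$) and the polynomial functor $A$ of degree at most $d$; it yields ${\rm Ext}^i(A,F)=0$ for all $i\leq n$. Since $n$ was arbitrary, letting it grow gives ${\rm Ext}^i(A,F)=0$ in every cohomological degree $i$, which is precisely the assertion ${\rm Ext}^*(A,F)=0$.

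The only point that genuinely requires attention is that the two quantifiers mesh correctly: Proposition~\ref{pran1} controls only the degrees $i\leq n$ out of the finite condition $(AH)^d_n$, so one really must use that $(RM)^d$ produces $(AH)^d_n$ for all $n$ simultaneously, and not merely for a single value. This is exactly what Lemma~\ref{lmanf} provides, itself resting on the stability of $(RM)^d$ under $\kappa_d$ established in Lemma~\ref{auxscg}. In this sense there is no real obstacle left: the substantive work---the adjunction/self-duality argument killing the maps induced by $cr_d(F)$ in Proposition~\ref{pran1}, together with the involution $\gamma$ and the $(RM)^d$-compatibility bookkeeping of Lemma~\ref{auxscg}---has already been carried out, and this proposition is merely their formal combination.
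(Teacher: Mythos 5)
Your argument is correct and is exactly the paper's own proof: the paper also obtains Proposition~\ref{pranf} as an immediate consequence of Lemma~\ref{lmanf} (passing from $(RM)^d$ to $(AH)^d$) and Proposition~\ref{pran1} (applied for each $n$). Your explicit remark about the quantifier over $n$ is just the unpacking of the word \emph{imm\'ediate} in the paper.
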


\begin{proof}
 C'est une conséquence immédiate du lemme~\ref{lmanf} et de la proposition~\ref{pran1}.
\end{proof}

\begin{rem}
 On a bien sûr un énoncé d'annulation analogue en homologie, en remplaçant les groupes d'extensions par des groupes de torsion. L'énoncé originel de Scorichenko en est une variante en termes de bifoncteurs et de leur homologie (de Hochschild).
\end{rem}

\begin{rem}\label{rq-cbut}
 Pour les résultats de cette section, la catégorie but $\mathbf{Mod}$ peut être une catégorie de Grothendieck avec assez de projectifs arbitraire.
\end{rem}

\begin{rem}
 Si l'on a ${\rm Ext}^*_{\A-\mathbf{Mod}}(A,F)=0$ pour tout foncteur polynomial $A$ (sans restriction de degré), alors la même annulation vaut lorsque $A$ est {\em analytique}, i.e. colimite (qu'on peut supposer filtrante, les foncteurs polynomiaux formant une sous-catégorie épaisse) de sous-foncteurs polynomiaux. Cela se déduit de la suite spectrale cohomologique associée aux colimites filtrantes.
 
 Cette annulation sur les foncteurs analytiques vaudra donc dans tous les exemples traités dans la suite de cette note.
\end{rem}

\section{Le théorème de comparaison homologique de Scorichenko}

Notons $\alpha : \A-\mathbf{Mod}\to\mathbf{M}(\A)-\mathbf{Mod}$ le foncteur de restriction. On donne ici la démonstration, due à Scorichenko, du résultat suivant :

\begin{thm}\label{thf-sco}
 Soient $A$ et $F$ deux $\A$-modules à gauche, $A$ étant supposé polynomial. Le morphisme naturel
 $${\rm Ext}^*_{\A-\mathbf{Mod}}(A,F)\to {\rm Ext}^*_{\mathbf{M}(\A)-\mathbf{Mod}}(\alpha(A),\alpha(F))$$
 induit par la restriction est un isomophisme.
\end{thm}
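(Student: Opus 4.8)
The plan is to convert the statement into a vanishing assertion to which the criterion of Proposition~\ref{pranf} applies. Write $\iota:\mathbf{M}(\A)\to\A$ for the inclusion, so that $\alpha=\iota^{*}$ is the (exact) restriction functor. It admits a right adjoint $\iota_{*}$ (right Kan extension along $\iota$), and since $\iota^{*}$ is exact, $\iota_{*}$ preserves injectives. Resolving $\alpha(F)$ injectively and applying the adjunction $\iota^{*}\dashv\iota_{*}$ termwise produces a natural identification
$${\rm Ext}^{*}_{\mathbf{M}(\A)-\mathbf{Mod}}(\alpha(A),\alpha(F))\simeq\mathbb{E}{\rm xt}^{*}_{\A-\mathbf{Mod}}(A,R\iota_{*}\alpha(F)),$$
under which the morphism of the theorem becomes the one induced by the unit $\eta_{F}:F\to R\iota_{*}\alpha(F)$. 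It therefore suffices to prove that $R{\rm Hom}_{\A-\mathbf{Mod}}(A,C)=0$ for $A$ polynomial, where $C$ is the cone of $\eta_{F}$.

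The next step is to pin down the cohomology of $C$. Because $\iota$ is the identity on objects, the restriction $\iota^{*}$ is faithful; consequently the unit $\eta_{F}$ is a monomorphism, so its kernel contributes nothing and the only surviving cohomology of $C$ is the cokernel $Q=\mathrm{coker}(F\to\iota_{*}\alpha(F))$ in degree $0$ together with the higher derived Kan extensions $R^{q}\iota_{*}\alpha(F)$ for $q\geq 1$. Via the hyper-Ext spectral sequence ${\rm Ext}^{p}_{\A-\mathbf{Mod}}(A,H^{q}(C))\Rightarrow\mathbb{E}{\rm xt}^{p+q}_{\A-\mathbf{Mod}}(A,C)$, it is then enough to show that each of these functors is acyclic for ${\rm Ext}^{*}_{\A-\mathbf{Mod}}(A,-)$ when $A$ is polynomial.

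The heart of the argument, and the step I expect to be the main obstacle, is to verify that $Q$ and the $R^{q}\iota_{*}\alpha(F)$ satisfy the condition $(RM)^{d}$ for every $d$ (in particular for the degree of $A$); Proposition~\ref{pranf} then furnishes the desired vanishing against every polynomial $A$. Here one must describe these functors explicitly through the comma categories $c\downarrow\iota$ that govern the right Kan extension, whose objects are the $\A$-morphisms $c\to m$ with $m$ an object of $\mathbf{M}(\A)$. The key observation is that these categories carry the direct-sum operation $c\mapsto c\oplus x$ used to build the cross-effects $cr_{d}$, and that the structural maps realizing it are exactly the morphisms of $\mathbf{M}_{0}(\A)$ already singled out in Lemma~\ref{auxscg}. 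I would exploit this, in the same spirit as Lemma~\ref{auxscg}, to manufacture natural retractions of $\theta(cr_{d}(-))$ on each error term, thereby establishing $(RM)^{d}$.

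With $(RM)^{d}$ in hand, Proposition~\ref{pranf} gives ${\rm Ext}^{*}_{\A-\mathbf{Mod}}(A,Q)=0$ and ${\rm Ext}^{*}_{\A-\mathbf{Mod}}(A,R^{q}\iota_{*}\alpha(F))=0$ for $A$ polynomial, and the spectral sequence then forces $R{\rm Hom}_{\A-\mathbf{Mod}}(A,C)=0$, which is precisely the claimed isomorphism. The genuinely delicate point is the explicit control of the derived Kan extension $R\iota_{*}\alpha(F)$ and the checking of $(RM)^{d}$ on its cone against the unit; everything surrounding it is formal adjunction bookkeeping resting on the vanishing criterion of Proposition~\ref{pranf}.
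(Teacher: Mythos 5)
Your reduction is exactly the one the paper performs: you introduce the right adjoint $\beta=\iota_*$ of $\alpha$, derive the adjunction (the paper phrases this as the spectral sequence ${\rm Ext}^p_{\A-\mathbf{Mod}}(A,\mathbf{R}^q\beta(X))\Rightarrow {\rm Ext}^{p+q}_{\mathbf{M}(\A)-\mathbf{Mod}}(\alpha(A),X)$), observe that the unit $F\to\beta\alpha F$ is injective because $\alpha$ is exact and faithful, and reduce the theorem to the vanishing of ${\rm Ext}^*_{\A-\mathbf{Mod}}(A,-)$ on the cokernel of the unit together with the higher derived functors $\mathbf{R}^q\beta\alpha(F)$, i.e.\ on the graded functor $(\mathbf{R}^*\beta\alpha F)/F$. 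Up to that point the proposal is correct and coincides with the paper. But everything after that point --- which you yourself flag as ``the main obstacle'' --- is precisely the content of Scorichenko's theorem, and you do not carry it out: no retraction is constructed, and the route you sketch for constructing one is not the one that works.

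Concretely, three things are missing or misdirected. First, the paper begins by reducing to the case where $A$ and $F$ vanish at $0$ (possible because the constant functor $\kk=P^{\mathbf{M}(\A)}_0$ is projective in $\mathbf{M}(\A)-\mathbf{Mod}$); without this normalization no candidate map $\theta\tau_d\mathbf{R}^*\beta\alpha(F)\to\theta\mathbf{R}^*\beta\alpha(F)$ can be natural with respect to the zero morphisms of $\mathbf{M}_0(\A)$, and one case of the key computation below fails. Second, the workable description of the derived Kan extension is not the limit over the comma categories $c\downarrow\iota$ but the Yoneda identification $\mathbf{R}^q\beta(X)(a)\simeq {\rm Ext}^q_{\mathbf{M}(\A)-\mathbf{Mod}}(\alpha P^\A_a,X)$; the map $\xi_a$ is then built from the exact endofunctor ${}_{\mathbf{M}}\sigma^d_a$ of $\mathbf{M}(\A)-\mathbf{Mod}$ (precomposition by $-\oplus a^{\oplus d}$), precomposition by $P^\A_a\to\sigma^d_a P^\A_{t_d(a)}$, and postcomposition by the split epimorphism $\sigma^d_a F\twoheadrightarrow F$. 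Third --- and this is the structural point your plan does not anticipate --- the resulting $\xi$ is \emph{not} a retraction of $\theta cr_d$ on $\mathbf{R}^*\beta\alpha(F)$ itself: the case analysis over $I\subset\mathbf{d}$ yields the identity for $I=\{0\}$, zero for $I=\varnothing$, but for every other $I$ a correction term which factors through $F(a)\hookrightarrow\mathbf{R}^*\beta\alpha F(a)$. Hence $(RM)^d$ is obtained only for the quotient $(\mathbf{R}^*\beta\alpha F)/F$, after checking that $\xi$ is compatible with the unit (via $p_{\{0\}}$) and so descends to that quotient; your plan of directly manufacturing retractions ``on each error term'' in the spirit of Lemma~\ref{auxscg} glosses over exactly this descent, and that lemma offers no help here --- it concerns the self-interaction $\tau_d^2$ and the stability of $(RM)^d$ under kernels and cokernels of compatible morphisms, not the construction of a retraction on a derived Kan extension. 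In sum, your proposal is a correct restatement of what must be proved, but the step that constitutes the actual proof is absent.
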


\begin{proof} On peut supposer que $A$ et $F$ sont nuls en $0$, puisque le foncteur constant $\kk=P^{\mathbf{M}(\A)}_0$ est projectif dans $\mathbf{M}(\A)-\mathbf{Mod}$.

 Le foncteur $\alpha$ possède un adjoint à droite $\beta : \mathbf{M}(\A)-\mathbf{Mod}\to\A-\mathbf{Mod}$, dont nous noterons $\mathbf{R}^*\beta$ les foncteurs dérivés à droite. L'adjonction entre le foncteur exact $\alpha$ et le foncteur $\beta$ se dérive en une suite spectrale convergente
 $$E^{p,q}_2={\rm Ext}^p_{\A-\mathbf{Mod}}(A,\mathbf{R}^q\beta(X))\Rightarrow {\rm Ext}^{p+q}_{\mathbf{M}(\A)-\mathbf{Mod}}(\alpha(A),X).$$
 Par conséquent, le théorème sera démontré si l'on établit que ${\rm Ext}^*_{\A-\mathbf{Mod}}(A,(\mathbf{R}^*\beta\alpha F)/F)=0$ (l'unité $F\to\beta\alpha F$ de l'adjonction est injective car le foncteur $\alpha$ est exact et fidèle).

\medskip

Par le lemme de Yoneda, le foncteur $\beta$ est donné par
$$\beta(X)(a)={\rm Hom}_{\mathbf{M}(\A)-\mathbf{Mod}}(\alpha P^\A_a,X)\;;\text{ plus généralement on a}$$
$$\mathbf{R}^i\beta(X)(a)={\rm Ext}^i_{\mathbf{M}(\A)-\mathbf{Mod}}(\alpha P^\A_a,X).$$
Noter que $\mathbf{R}^*\beta(X)$ est comme $X$, nul en $0$.

Soit $d\in\mathbb{N}$. On définit une transformation naturelle graduée
$$\xi : \theta\tau_d\mathbf{R}^*\beta\alpha(F)\to\theta\mathbf{R}^*\beta\alpha(F)$$
de la manière suivante : pour tout objet $a$ de $\A$, $\xi_a$ est le morphisme
$$\xi_a : {\rm Ext}^*_{\mathbf{M}(\A)-\mathbf{Mod}}(\alpha P^\A_{t_d(a)},\alpha F)\to {\rm Ext}^*_{\mathbf{M}(\A)-\mathbf{Mod}}(\alpha\sigma^d_a P^\A_{t_d(a)},\alpha\sigma^d_a F)\to {\rm Ext}^*_{\mathbf{M}(\A)-\mathbf{Mod}}(\alpha P^\A_a,\alpha F)$$
où la première flèche est induite par l'endofoncteur exact $_\mathbf{M}\sigma^d_a$ de $\mathbf{M}(\A)-\mathbf{Mod}$ donné par la précomposition par $-\oplus a^{\oplus d}$, qui vérifie la propriété $_\mathbf{M}\sigma^d_a\circ\alpha=\alpha\circ\sigma^d_a$, où $\sigma^d_a$ est la précomposition analogue sur $\A-\mathbf{Mod}$, et la seconde flèche est induite par la précomposition par le morphisme $P^\A_a=\mathbb{Z}[{\rm Hom}_\A[(a,-)]\to\sigma^d_a P^\A_{t_d(a)}=\mathbb{Z}[{\rm Hom}_\A(a\oplus a^{\oplus d},-\oplus a^{\oplus d})]$ induit par le foncteur $-\oplus a^{\oplus d}$ et la postcomposition par l'épimorphisme naturel scindé $\sigma^d_a F\twoheadrightarrow F$ (déduit des projections canoniques $b\oplus a^{\oplus d}\twoheadrightarrow b$).

\medskip 

Avant toute chose, remarquons que le diagramme
\begin{equation}\label{eqrel}
\xymatrix{\tau_d(F)(a)\ar[r]^{p_{\{0\}}(a)}\ar[d] & F(a)\ar[d] \\
 \tau_d\mathbf{R}^*\beta\alpha(F)(a)\ar[r]_{\xi_a} & \mathbf{R}^*\beta\alpha(F)(a)}
\end{equation}
dont les flèches verticales sont données par l'unité de l'adjonction
commute, parce que $p_{\{0\}}(a)$ peut se voir comme la composée
$$\tau_d F(a)\simeq {\rm Hom}_{\A-\mathbf{Mod}}(P^\A_{t_d(a)},F)\to {\rm Hom}_{\A-\mathbf{Mod}}(\sigma^d_a P^\A_{t_d(a)},\sigma^d_a F)\to {\rm Hom}_{\A-\mathbf{Mod}}(P^\A_a,F)\simeq F(a)$$
définie de manière analogue à $\xi_a$.

\medskip

Vérifions que les applications linéaires $\xi_a$ sont fonctorielles en $a\in{\rm Ob}\,\mathbf{M}_0(\A)$.

Si $f : a\to b$ est un monomorphisme scindé, $g : b\to a$ une rétraction de $f$ et $a'$ un objet de $\A$ tel que $b\simeq a\oplus a'$ et que $f$, $g$ s'identifient respectivement à l'injection $a\hookrightarrow a\oplus a'$ et à la projection standard $a\oplus a'\twoheadrightarrow a$ via cet isomorphisme, le diagramme suivant
$$\xymatrix{{\rm Ext}^*_{\mathbf{M}(\A)-\mathbf{Mod}}(\alpha P^\A_{t_d(a)},\alpha F)\ar[r]\ar[dd]_{f_*} & {\rm Ext}^*_{\mathbf{M}(\A)-\mathbf{Mod}}(\alpha\sigma^d_a P^\A_{t_d(a)},\alpha\sigma^d_a F)\ar[r]\ar[d]_{f_*} & {\rm Ext}^*_{\mathbf{M}(\A)-\mathbf{Mod}}(\alpha P^\A_a,\alpha F)\ar[dd]^{f_*} \\
& {\rm Ext}^*_{\mathbf{M}(\A)-\mathbf{Mod}}(\alpha\sigma^d_a P^\A_{t_d(b)},\alpha\sigma^d_a F)\ar[d]^l\ar[dr]^k & \\
 {\rm Ext}^*_{\mathbf{M}(\A)-\mathbf{Mod}}(\alpha P^\A_{t_d(b)},\alpha F)\ar[r]\ar[ur]^j & {\rm Ext}^*_{\mathbf{M}(\A)-\mathbf{Mod}}(\alpha\sigma^d_b P^\A_{t_d(b)},\alpha\sigma^d_b F)\ar[r] & {\rm Ext}^*_{\mathbf{M}(\A)-\mathbf{Mod}}(\alpha P^\A_b,\alpha F)
}$$
commute, où les lignes horizontales sont les flèches définissant $\xi_a$ et $\xi_b$, $j$ est induit par le foncteur exact $_\mathbf{M}\sigma^d_a$, $k$ par les morphisme $\sigma^d_a F\twoheadrightarrow F$ et $P^\A_a\to\sigma^d_a P^\A_{t_d(a)}$ décrits plus haut et $l$ est induit par le morphisme $_\mathbf{M}\sigma^d_{a'}$ (utiliser l'isomorphisme $\sigma_b\simeq\sigma_{a'}\circ\sigma_a$ déduit de l'isomorphisme $b\simeq a\oplus a'$ et l'analogue avec $_\mathbf{M}\sigma$). Cela établit la naturalité de $\xi$ relativement à $f$.

Il ne reste donc plus, pour voir que $\xi$ définit une transformation naturelle $\theta\circ\tau_d(\mathbf{R}^*\beta\alpha(F))\to\theta(\mathbf{R}^*\beta\alpha(F))$, qu'à constater la naturalité de $\xi$ relativement aux morphismes nuls, qui découle de la nullité en $0$ de tous les foncteurs considérés.

\medskip

Examinons maintenant la composition $\xi\circ\theta(cr_d(\mathbf{R}^*\beta\alpha(F)))$. Pour cela, on considère, pour $I\subset\mathbf{d}$, le diagramme commutatif
$$\xymatrix{{\rm Ext}^*(\alpha P^\A_a,\alpha F)\ar[r]^{(u_I)_*}\ar[d] & {\rm Ext}^*(\alpha P^\A_{t_d(a)},\alpha F)\ar[d] & \\
{\rm Ext}^*(\alpha\sigma^d_a P^\A_a,\alpha\sigma^d_a F)\ar[r]_{(u_I)_*} & {\rm Ext}^*(\alpha\sigma^d_a P^\A_{t_d(a)},\alpha\sigma^d_a F)\ar[r] & {\rm Ext}^*(\alpha P^\A_a,\alpha F)
}$$
(où les groupes d'extensions sont pris dans la catégorie $\mathbf{M}(A)-\mathbf{Mod}$ et les flèches verticales sont induites par le foncteur exact $_\mathbf{M}\sigma^d_a$) qui explicite la composition
\begin{equation}\label{eqcomp}
\mathbf{R}^*\beta\alpha(F)(a)\xrightarrow{(u_I)_*}\tau_d\mathbf{R}^*\beta\alpha(F)(a)\xrightarrow{\xi_a}\mathbf{R}^*\beta\alpha(F)(a).
\end{equation}

Sa ligne inférieure est induite par la postcomposition par l'épimorphisme canonique $\alpha\sigma^d_a F\twoheadrightarrow\alpha F$ et la précomposition par le morphisme composé
\begin{equation}\label{compp}
\alpha P^\A_a\to\alpha\sigma^d_a P^\A_{t_d(a)}\to\alpha\sigma^d_a P^\A_a 
\end{equation}
$$[f]\mapsto [f\oplus a^{\oplus d}]\mapsto [(f\oplus a^{\oplus d})\circ u_I]=[f\circ u_0,u_1,\dots,u_d]\qquad (\text{pour }f\in {\rm Hom}_\A(a,b))$$
(où l'on a écrit $u_I=(u_0,u_1,\dots,u_d)$).

On distingue ensuite les cas suivants.
\begin{enumerate}
 \item $I=\{0\}$. Notre flèche est alors induite par l'injection canonique de foncteurs $id\hookrightarrow\sigma^d_a$, qui provient elle-même du morphisme canonique de foncteur $id\to_\mathbf{M}\sigma^d_a$ (données par l'injection canonique $b\hookrightarrow b\oplus a^{\oplus d}$). Par conséquent, la composition
 $${\rm Ext}^*(\alpha P^\A_a,\alpha F)\to {\rm Ext}^*(\alpha\sigma^d_a P^\A_a,\alpha\sigma^d_a F)\to {\rm Ext}^*(\alpha P^\A_a,\alpha\sigma^d_a F)$$
 coïncide avec la postcomposition par le morphisme naturel $\alpha F\to\alpha\sigma^d_a F$ induit par $F\hookrightarrow\sigma^d_a F$. Comme la postcomposition de morphisme par l'épimorphisme canonique $\sigma^d_a F\twoheadrightarrow\alpha F$ est l'identité, on voit que la composée (\ref{eqcomp}) égale l'identité dans le cas qu'on considère.
 \item $I=\varnothing$. Le morphisme~(\ref{eqcomp}) est alors nul, puisque $\mathbf{R}^*\beta\alpha(F)$ est nul en~$0$.
 \item $I\notin\{\{0\},\varnothing\}$. Le morphisme $(f\circ u_0,u_1,\dots,u_d)$ est alors toujours un monomorphisme scindé (il exite $i\geq 1$ tel que $u_i=id_a$) ; autrement dit, le morphisme (\ref{compp}) se factorise par l'inclusion de $_\mathbf{M}\sigma^d_a P^{\mathbf{M}(\A)}_a$ dans $\alpha\sigma^d_a P^\A_a$. On peut par conséquent former un diagramme commutatif 
$$\xymatrix{{\rm Ext}^*(\alpha P^\A_a,\alpha F)\ar[r]\ar[d] & {\rm Ext}^*(\alpha\sigma^d_a P^\A_a,\alpha\sigma^d_a F)\ar[r]\ar[d] & {\rm Ext}^*(\alpha P^\A_a,\alpha F) \\
{\rm Ext}^*(P^{\mathbf{M}(\A)}_a,\alpha F)\ar[r] & {\rm Ext}^*(_\mathbf{M}\sigma^d_a P^{\mathbf{M}(\A)}_a,\alpha\sigma^d_a F)\ar[ru] &
}$$
dont la ligne supérieure a pour composée (\ref{eqcomp}) et dont les deux flèches verticales sont induites par le foncteur exact $_\mathbf{M}\sigma^d_a$. On en déduit facilement que (\ref{eqcomp}) coïncide avec 
$$\mathbf{R}^*\beta\alpha(F)(a)\to F(a)\xrightarrow{F(u_0)}F(a)\hookrightarrow\mathbf{R}^*\beta\alpha F(a).$$ 
\end{enumerate}

Cela montre que le morphisme $\theta\tau_d(\mathbf{R}^*\beta\alpha(F)/F)\to\theta(\mathbf{R}^*\beta\alpha(F)/F)$ induit par $\xi$ (le diagramme commutatif (\ref{eqrel}) en justifie l'existence) est une rétraction de l'effet croisé $\theta cr_d(\mathbf{R}^*\beta\alpha(F)/F)$. La proposition~\ref{pranf} permet de conclure.
\end{proof}

\begin{rem} On a un énoncé analogue en termes de groupes de torsion ou d'homologie de Hochschild de bifoncteurs.
\end{rem}

\section{Autres résultats d'annulation cohomologique déduits du critère général}

On commence par donner une généralisation d'un résultat classique dû à Franjou, qui traite le cas où $\A$ est la catégorie des espaces vectoriels de dimension finie sur un corps fini.

\begin{thm}\label{th-fra}
 Soient $a$ un objet de $\A$ et $\bar{P}^\A_a=Ker\,(P^\A_a\to P^\A_0)$ le projectif réduit de $\A-\mathbf{Mod}$ associé à $a$. Alors
 $${\rm Ext}^*_{\A-\mathbf{Mod}}(F,\bar{P}^\A_a)=0\quad\text{si }F\in {\rm Ob}\,\A-\mathbf{Mod}\text{ est polynomial.}$$
\end{thm}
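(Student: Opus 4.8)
Le plan est de se ramener au crit\`ere d'annulation de la proposition~\ref{pranf}. Supposons $F$ polynomial de degr\'e au plus $d$. En appliquant cette proposition avec $F$ dans le r\^ole du premier argument (le foncteur polynomial, not\'e $A$ dans son \'enonc\'e) et $\bar{P}^\A_a$ dans celui du second, on obtiendra l'annulation voulue ${\rm Ext}^*_{\A-\mathbf{Mod}}(F,\bar{P}^\A_a)=0$ d\`es que l'on aura \'etabli que $\bar{P}^\A_a$ v\'erifie la condition $(RM)^d$. Comme $F$ est arbitraire, il suffit donc de montrer que $\bar{P}^\A_a$ satisfait $(RM)^d$ pour tout $d\in\mathbb{N}$, c'est-\`a-dire de construire une r\'etraction de $\theta(cr_d(\bar{P}^\A_a))$ naturelle relativement \`a $\mathbf{M}_0(\A)$.

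L'outil essentiel est la propri\'et\'e exponentielle du foncteur $P^\A_a$ : l'isomorphisme naturel ${\rm Hom}_\A(a,X\oplus Y)\simeq {\rm Hom}_\A(a,X)\oplus {\rm Hom}_\A(a,Y)$ induit un isomorphisme naturel $P^\A_a(X\oplus Y)\simeq P^\A_a(X)\otimes P^\A_a(Y)$, compatible \`a la somme directe. Le scindage naturel $P^\A_a\simeq\kk\oplus\bar{P}^\A_a$ fourni par l'\'el\'ement de base associ\'e au morphisme nul $a\to X$ en fait un foncteur exponentiel r\'eduit ; on en d\'eduit une d\'ecomposition $\bar{P}^\A_a(X^{\oplus(d+1)})\simeq\bigoplus_{\varnothing\neq S\subset\mathbf{d}}\bar{P}^\A_a(X)^{\otimes S}$ (o\`u $\bar{P}^\A_a(X)^{\otimes S}$ est le produit tensoriel des copies index\'ees par $S$), naturelle relativement \`a $t_d$ et gradu\'ee par $S$. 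On notera que $\bar{P}^\A_a$ est nul en $0$, ce qui rendra imm\'ediate la naturalit\'e vis-\`a-vis des morphismes nuls de $\mathbf{M}_0(\A)$.

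Il reste alors \`a calculer l'effet crois\'e dans cette d\'ecomposition. En notant $\langle g\rangle=[g]-[0]$ les \'el\'ements de base de $\bar{P}^\A_a(X)$, on trouve $\bar{P}^\A_a((u_I)_X)(\langle g\rangle)=\sum_{\varnothing\neq T\subset I}\langle g\rangle^{\otimes T}$, de sorte que la somme altern\'ee d\'efinissant $cr_d$ fait appara\^itre pour chaque $T$ le coefficient $\sum_{T\subset I\subset\mathbf{d}}(-1)^{|I|}$, nul par inclusion-exclusion sauf si $T=\mathbf{d}$. On obtient ainsi $cr_d(\bar{P}^\A_a)_X(\langle g\rangle)=(-1)^{d+1}\langle g\rangle^{\otimes(d+1)}$ : l'effet crois\'e envoie isomorphiquement $\bar{P}^\A_a(X)$ sur la diagonale de la composante sup\'erieure $\bar{P}^\A_a(X)^{\otimes(d+1)}$. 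La r\'etraction que je proposerais est la compos\'ee de la projection sur cette composante sup\'erieure (naturelle car la d\'ecomposition est gradu\'ee) avec l'application $(-1)^{d+1}D_X$, o\`u $D_X:\bar{P}^\A_a(X)^{\otimes(d+1)}\to\bar{P}^\A_a(X)$ envoie $\langle g\rangle^{\otimes(d+1)}$ sur $\langle g\rangle$ et tout autre \'el\'ement de base $\langle g_0\rangle\otimes\dots\otimes\langle g_d\rangle$, dont les $g_i$ ne sont pas tous \'egaux, sur $0$.

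Le point d\'elicat, et la raison pour laquelle on travaille dans $\mathbf{M}_0(\A)$, sera la naturalit\'e de $D$. Cette application n'est pas naturelle pour un morphisme quelconque de $\A$ : un morphisme non injectif sur les ${\rm Hom}_\A(a,-)$ peut identifier des \'el\'ements de base distincts et cr\'eer des termes diagonaux parasites. En revanche, si $\phi:X\to Y$ est un monomorphisme scind\'e, l'application ${\rm Hom}_\A(a,X)\to {\rm Hom}_\A(a,Y)$ qu'il induit est injective ; les $\phi g_i$ sont alors deux \`a deux \'egaux si et seulement si les $g_i$ le sont, et $D$ commute \`a l'action de $\phi$. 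C'est exactement ce qui assure que $(-1)^{d+1}D$ compos\'e avec la projection est une r\'etraction dans $\mathbf{M}_0(\A)-\mathbf{Mod}$ de $\theta(cr_d(\bar{P}^\A_a))$, d'o\`u $(RM)^d$ et le th\'eor\`eme. Tous ces arguments restent valables sur un anneau commutatif $\kk$ quelconque, les modules en jeu \'etant libres.
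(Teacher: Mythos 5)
Your proof is correct, and it follows the paper's overall strategy: establish that $\bar{P}^\A_a$ satisfies $(RM)^d$ for every $d$ and invoke Proposition~\ref{pranf}. Where you genuinely differ is in the construction of the retraction. The paper makes no use of the exponential decomposition: it works with the universal reduced linearization $\overline{\mathbb{Z}[-]}:\mathbf{Ab}\to\mathbf{Ab}$ and directly writes down the map sending $[x_0,\dots,x_d]-[0]$ to $[x_0]-[0]$ if $x_i\neq 0$ for all $i>0$, and to $0$ otherwise; it then observes that the property $(RM)^d$ is stable under precomposition by an additive functor, so that $\bar{P}^\A_a=\overline{\mathbb{Z}[-]}\circ {\rm Hom}_\A(a,-)$ inherits it. Note that this retraction imposes no equality among the components, only non-vanishing of the last $d$ of them; verifying that it retracts $\theta cr_d$ amounts to the same inclusion-exclusion you carried out (only the terms with $I\supset\{1,\dots,d\}$ survive, and $I=\{1,\dots,d\}$ contributes $0$ because its $0$-th component vanishes). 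Your route buys a structural explanation: identifying $cr_d$, up to sign, as the diagonal inclusion into the top tensor component makes the existence of a retraction transparent, and you track the sign $(-1)^{d+1}$ explicitly, a point the paper glosses over (its map, too, is a retraction only up to that sign). The paper's route buys brevity and reusability: a one-line formula on a universal example, plus a stability principle (precomposition by additive functors) that transports it to any $\bar{P}^\A_a$. Both arguments hinge on exactly the point you isolate: a split monomorphism induces an injection on ${\rm Hom}_\A(a,-)$, and zero morphisms cause no trouble because all the functors involved vanish at $0$.
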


\begin{proof}
 Considérons le foncteur $\overline{\mathbb{Z}[-]} : \mathbf{Ab}\to\mathbf{Ab}$ noyau de l'idéal d'augmentation $\mathbb{Z}[-]\to\mathbb{Z}$. Pour tout $d\in\mathbb{N}$, on définit un morphisme $\theta\tau_d\overline{\mathbb{Z}[-]}\to\theta\overline{\mathbb{Z}[-]}$ en envoyant $[x_0,\dots,x_d]-[0]$ sur $[x_0]-[0]$ si les $x_i$ sont non nuls pour $i>0$ et sur $0$ sinon. C'est une rétraction de $\theta cr_d(\overline{\mathbb{Z}[-]})$, de sorte que $\overline{\mathbb{Z}[-]}$ vérifie la propriété $(RM)^d$. Mais cette propriété étant stable par précomposition par un foncteur additif, on en déduit que  $\bar{P}^\A_a=\overline{\mathbb{Z}[-]}\circ {\rm Hom}_\A(a,-)$ la satisfait également, de sorte que la proposition~\ref{pranf} donne le résultat.
\end{proof}

\bigskip

Supposons maintenant donné un foncteur $T$ de $\A$ vers la catégorie $\mathbf{Ens}_*$ des ensembles pointés. Notons $\A_T$ la catégorie des couples $(a,t)$, où $a$ est un objet de $\A$ et $t$ un élément de $T(a)$, les morphismes de $(a,t)$ vers $(b,u)$ étant les morphismes $f : a\to b$ de $\A$ tels que $T(f)(t)=u$. Le foncteur $\A_T\to\A\quad (a,t)\mapsto a$ induit un foncteur $\iota_T : \A-\mathbf{Mod}\to\A_T-\mathbf{Mod}$ qui possède un adjoint à gauche exact $\omega_T$ tel que
$$\omega_T(X)(a)=\bigoplus_{t\in T(a)} X(a,t).$$
On définit un autre foncteur exact $\lambda_T : \A_T-\mathbf{Mod}\to\A-\mathbf{Mod}$ comme la précomposition par le foncteur $\A\to\A_T\quad a\mapsto (a,*)$ (où l'on note $*$ le point de base de $T(a)$). On dispose d'une transformation naturelle injective évidente $\lambda_T\hookrightarrow\omega_T$.

\begin{thm}\label{th-djagl}
 Sous l'hypothèse
 $$(H)\qquad\text{le morphisme naturel }\; T(A)\vee T(B)\to T(A\oplus B)\; \text{ est injectif}$$
(où $\vee$ désigne la somme dans la catégorie $\mathbf{Ens}_*$), l'inclusion naturelle $\lambda_T(X)\hookrightarrow\omega_T(X)$ induit pour tout $X\in {\rm Ob}\,\A_T-\mathbf{Mod}$ et tout $F\in {\rm Ob}\,\A-\mathbf{Mod}$ polynomial un isomorphisme
$${\rm Ext}^*_{\A-\mathbf{Mod}}(F,\lambda_T(X))\simeq {\rm Ext}^*_{\A-\mathbf{Mod}}(F,\omega_T(X)).$$
\end{thm}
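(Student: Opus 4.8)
The plan is to reduce the statement to a vanishing result to which Proposition~\ref{pranf} applies. Write $Q$ for the cokernel of the given inclusion $\lambda_T(X)\hookrightarrow\omega_T(X)$, so that one has a short exact sequence $0\to\lambda_T(X)\to\omega_T(X)\to Q\to 0$ in $\A-\mathbf{Mod}$. The associated long exact sequence in ${\rm Ext}^*_{\A-\mathbf{Mod}}(F,-)$ shows that the map induced by the inclusion is an isomorphism as soon as ${\rm Ext}^*_{\A-\mathbf{Mod}}(F,Q)=0$ for every polynomial $F$, so the whole theorem reduces to this vanishing. Concretely, since $\omega_T(X)(a)=\bigoplus_{t\in T(a)}X(a,t)$ and $\lambda_T(X)(a)=X(a,*)$ is the summand indexed by the basepoint, $Q$ is the reduced functor $a\mapsto\bigoplus_{t\in T(a)\setminus\{*\}}X(a,t)$, a morphism $f\colon a\to b$ acting by sending the summand $X(a,t)$ into the summand $X(b,T(f)(t))$ via the morphism $(a,t)\to(b,T(f)(t))$ of $\A_T$. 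I first observe that applying hypothesis $(H)$ with $A=B=0$ forces $T(0)=\{*\}$, so that $Q$ is nul en $0$ and $T$ carries every zero morphism to the basepoint.

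Following the strategy of Th\'eor\`eme~\ref{th-fra}, I would prove the sharper statement that $Q$ satisfies the condition $(RM)^d$ for every $d$, which by Proposition~\ref{pranf} yields the desired vanishing against every polynomial $F$. To this end I construct, in the spirit of the retraction used for $\overline{\mathbb{Z}[-]}$, a retraction $r\colon\theta\tau_d(Q)\to\theta(Q)$ of $\theta(cr_d(Q))$ in $\mathbf{M}_0(\A)-\mathbf{Mod}$. Writing $q_j\colon a^{\oplus(d+1)}\to a$ for the $j$-th projection, I define $r_a$ on the summand $X(a^{\oplus(d+1)},s)$ of $\tau_d Q(a)=Q(a^{\oplus(d+1)})$ by the rule: if $T(q_j)(s)\neq *$ for all $j\in\{1,\dots,d\}$, send it via $X(q_0)$ into the summand indexed by $T(q_0)(s)$ (which is killed in $Q$ when $T(q_0)(s)=*$); otherwise send it to $0$. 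The role of $(H)$ is to guarantee that such an $s$ has a well-defined support among the $d+1$ factors, so that this rule genuinely isolates the diagonal contribution, exactly as the condition ``$x_i\neq 0$ pour $i>0$'' does in the case of $\overline{\mathbb{Z}[-]}$.

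Two verifications remain, and the second is the main obstacle. First, that $r$ is a retraction: evaluating $r_a\circ cr_d(Q)_a$ on $X(a,t)$ one uses $q_j u_I=\mathrm{id}$ for $j\in I$ and $q_j u_I=0$ otherwise, together with $T(0)=\{*\}$, to see that only the subsets $I\supseteq\{1,\dots,d\}$ survive the detection rule, and that among these only $I=\mathbf{d}$ contributes a nonzero term in $Q$, yielding $\pm\mathrm{id}$. Second, the naturality of $r$ over $\mathbf{M}_0(\A)$: for a split monomorphism $f$ the map $T(f)$ is injective and preserves the basepoint, so the detection condition is preserved, and the identity $q_0\circ f^{\oplus(d+1)}=f\circ q_0$ makes the naturality square commute, while naturality for zero morphisms follows from $Q$ being nul en $0$. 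The delicate point is to check, using the full force of $(H)$, that the decomposition into summands $X(-,s)$ is compatible enough with split inclusions for $r$ to be well defined and natural as a single transformation; once this is settled, Lemme~\ref{lmanf} and Proposition~\ref{pranf} conclude.
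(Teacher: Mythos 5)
Your proof is correct, and its skeleton is exactly the paper's: reduce to the cokernel $Q$ (the paper phrases this as replacing $X$ by its quotient $X''$ killing the basepoint summands, for which $\lambda_T(X'')=0$ and $\omega_T(X'')=Q$), prove that $Q$ satisfies $(RM)^d$ for every $d$ by exhibiting an explicit retraction of $\theta(cr_d(Q))$ in $\mathbf{M}_0(\A)-\mathbf{Mod}$, and conclude by Proposition~\ref{pranf}. The genuine difference is the retraction formula, i.e.\ the heart of the argument. The paper's detection rule is complementary to yours: the component of its $\xi_a$ out of the summand $X(a^{\oplus(d+1)},u)$ is nonzero exactly when $u$ lies in the image of $T(u_{\{0\}}):T(a)\to T(a^{\oplus(d+1)})$, the map being then induced by $p_{\{0\}}$; with that rule the only term of the alternating sum $cr_d$ that survives is $I=\{0\}$, whereas with your Franjou-style rule (all projections on the factors $1,\dots,d$ away from the basepoint; your $q_j$ is the paper's $p_{\{j\}}$) the surviving term is $I=\mathbf{d}$; in both cases the composite is $\pm\mathrm{id}$, which suffices. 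Both rules work and the verifications have the same difficulty. Yours has the merit of being the literal generalization of the retraction used in the paper's proof of Th\'eor\`eme~\ref{th-fra} (replace ``$x_i\neq 0$'' by ``$T(p_{\{i\}})(s)\neq *$''), a nice unification; the paper's rule is instead the one whose shape parallels the transformation $\xi$ in the proof of Th\'eor\`eme~\ref{thf-sco} (detection of what factors through the inclusion of the factor indexed by $0$).

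Two remarks that dissolve the ``delicate point'' you flag at the end. (i) Well-definedness of $r_a$ is automatic: a morphism out of a direct sum may be specified summand by summand, and your rule sends each summand $X(a^{\oplus(d+1)},s)$ into a single specified summand of $Q(a)$ (or to $0$). (ii) Your naturality argument is already complete as written, and needs less than you fear: for a split monomorphism $f$, the map $T(f)$ is injective and pointed for \emph{any} functor $T$ to $\mathbf{Ens}_*$ (a retraction of $f$ gives a retraction of $T(f)$), so the detection conditions for $s$ and for $T(f^{\oplus(d+1)})(s)$ are equivalent, and the squares commute because $p_{\{0\}}\circ f^{\oplus(d+1)}=f\circ p_{\{0\}}$; naturality with respect to zero morphisms uses only $Q(0)=0$. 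Thus the only input you extract from $(H)$ is $T(0)=\{*\}$. This is not a secret weakening of the hypothesis: $(H)$ is in fact \emph{equivalent} to $T(0)=\{*\}$, since conversely, if $T(i_A)(t)=T(i_B)(t')$, applying $T(p_A)$ and noting that $p_A\circ i_B$ factors through the zero object gives $t=*$, and symmetrically $t'=*$.
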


\begin{proof}
 En remplaçant $X$ par son quotient par le sous-objet $X'$ défini par $X'(a,t)=X(a,*)$ si $t=*$, $0$ sinon, on se ramène au cas où $\lambda_T(X)=0$.
 
Soit $d\in\mathbb{N}$. On définit un morphisme $\xi : \theta\tau_d\omega_T(X)\to\theta\omega_T(X)$ comme suit : pour tout objet $a$ de $\A$,
$$\xi_a : \bigoplus_{u\in T(a^{\oplus (d+1)})} X(a^{\oplus (d+1)},u)\to\bigoplus_{t\in T(a)} X(a,t)$$
a pour composante $X(a^{\oplus (d+1)},u)\to X(a,u)$ le morphisme induit par $p_{\{0\}} : a^{\oplus (d+1)}\to a$ (projection sur le premier facteur) si $T(i_{\{0\}}) : T(a)\to T(a^{\oplus (d+1)})$ envoie $t$ sur $u$ (ce qui entraîne $T(p_{\{0\}})(u)=t$ puisque $p_{\{0\}}\circ i_{\{0\}}=id_a$), $0$ sinon. Le fait que $\xi$ est bien une transformation naturelle de foncteurs depuis $\mathbf{M}_0(\A)$ provient de ce que $\lambda_T(X)=0$ (qui assure la naturalité relativement aux morphismes nuls) et de l'hypothèse $(H)$ (qui implique que $T$ transforme un monomorphisme scindé en une injection).

La composée
$$\theta\omega_T(X)\xrightarrow{(u_I)_*}\theta\tau_d\omega_T(X)\xrightarrow{\xi}\theta\omega_T(X)$$
est nulle pour $I\neq\{0\}$, car l'hypothèse $(H)$ implique que les images de $T(u_I) : T(a)\to T(a^{\oplus (d+1)})$ et $T(u_{\{0\}})$ ne se rencontrent qu'en le point de base, de sorte que la nullité de $\lambda_T(X)$ permet de conclure.

Pour $I=\{0\}$, cette composée égale l'identité. Le foncteur $\omega_T(X)$ vérifie donc l'hypothèse $(RM)^d$ (pour tout $d$), d'où la conclusion par la proposition~\ref{pranf}.
\end{proof}

Un cas particulier important est celui où $\A$ est une catégorie abélienne (ou plus généralement une catégorie additive où tout morphisme se factorise, de manière unique à isomorphisme près, comme un épimorphisme suivi d'un monomorphisme, ce qui permet de disposer de la notion d'image) et le foncteur $T=\Gr$ associe à un objet $a$ de $\A$ l'ensemble de ses sous-objets (i.e. le quotient de l'ensemble des monomorphismes de but $a$ par l'action à gauche tautologique du groupe ${\rm Aut}_\A(a)$), pointé par le sous-objet nul. La propriété $(H)$ est clairement vérifiée. Mais on peut obtenir en fait mieux que le résultat donné par le théorème précédent. On introduit quelques notations supplémentaires à cet effet.

Soit $\mathbb{E}(\A)$ la sous-catégorie des épimorphismes (non nécessairement scindés) de $\A$. On dispose de foncteurs $r : \A_\Gr\to\mathbb{E}(\A)\quad (a,t)\mapsto t$ et $s : \A\times\mathbb{E}(\A)\to\A_\Gr\quad (a,x)\mapsto (a\oplus x,x)$. On note $\sigma : \A_\Gr-\mathbf{Mod}\to\A\times\mathbb{E}(\A)-\mathbf{Mod}\simeq\mathbf{Fct}(\A,\mathbb{E}(\A)-\mathbf{Mod})$ (catégorie des foncteurs de $\A$ vers $\mathbb{E}(\A)-\mathbf{Mod}$) la précomposition par $s$. Nous dirons qu'un foncteur $X$ de $\A_\Gr-\mathbf{Mod}$ est {\em polynomial} si $\sigma(X)$ l'est (la définition d'un foncteur polynomial dans $\mathbf{Fct}(\A,\mathbb{E}(\A)-\mathbf{Mod})$ est la même que dans $\A-\mathbf{Mod}=\mathbf{Fct}(\A,\mathbf{Mod})$). On définit enfin un endofoncteur $\nu$ de $\A_\Gr$ par
$$\nu(X)(a,t)=\bigoplus_{u\in\Gr(t)}X(a,u),$$
où un morphisme $f : (a,t)\to (a',t')$ induit $\nu(X)(f)$ ayant pour composante $X(a,u)\to X(a',u')$ le morphisme induit par $f$ si $f(u)=u'$, $0$ sinon. On remarque que $\nu$ est un sous-foncteur de $\iota\omega$ (on omet les indices $\Gr$) --- noter que $\iota\omega(X)(a,t)=\bigoplus_{u\in\Gr(a)}X(a,u)$.

\begin{thm}\label{th-dja2}
 Sous les hypothèses précédentes, si $X$ et $Y$ sont deux objets de $\A_\Gr-\mathbf{Mod}$, $X$ étant supposé polynomial, le morphisme naturel
 $${\rm Ext}^*_{\A_\Gr-\mathbf{Mod}}(X,\nu(Y))\to {\rm Ext}^*_{\A_\Gr-\mathbf{Mod}}(X,\iota\omega(Y))\xrightarrow{\simeq} {\rm Ext}^*_{\A-\mathbf{Mod}}(\omega(X),\omega(Y))$$
(où la première flèche est induite par l'inclusion $\nu(Y)\hookrightarrow\iota\omega(Y)$ et la seconde est l'isomorphisme déduit de l'adjonction entre les foncteurs exacts $\iota$ et $\omega$) est un isomorphisme.
\end{thm}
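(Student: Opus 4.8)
Le plan est de reproduire la strat\'egie des th\'eor\`emes~\ref{thf-sco} et~\ref{th-djagl} : se ramener \`a une annulation de groupes d'extensions, puis l'obtenir en exhibant une r\'etraction d'un effet crois\'e, de fa\c{c}on \`a invoquer le crit\`ere g\'en\'eral (proposition~\ref{pranf}). D'abord, la seconde fl\`eche est un isomorphisme : $\omega$ \'etant exact, son adjoint \`a droite $\iota$ pr\'eserve les injectifs, et l'adjonction donne ${\rm Ext}^*_{\A_\Gr-\mathbf{Mod}}(X,\iota\omega(Y))\simeq {\rm Ext}^*_{\A-\mathbf{Mod}}(\omega(X),\omega(Y))$. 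Il reste \`a voir que la premi\`ere fl\`eche, induite par $\nu(Y)\hookrightarrow\iota\omega(Y)$, est un isomorphisme ; par la suite exacte longue de cohomologie, cela \'equivaut \`a ${\rm Ext}^*_{\A_\Gr-\mathbf{Mod}}(X,Q)=0$, o\`u $Q$ est le conoyau. On a $Q(a,t)=\bigoplus_u Y(a,u)$, la somme portant sur les sous-objets $u$ de $a$ \emph{non contenus} dans $t$. En particulier, aucun sous-objet contenu dans $t$ (notamment le sous-objet nul) n'indexe de facteur de $Q(a,t)$, ce qui dispensera de la normalisation ($\lambda_\Gr=0$) du th\'eor\`eme~\ref{th-djagl} : le passage \`a $\nu$ en tient lieu, et c'est pr\'ecis\'ement ce qui autorise le r\'esultat plus fin.

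\emph{Mise en place.} La cat\'egorie $\A_\Gr$ n'\'etant pas additive, on ne dispose ni de $\tau_d$ ni de l'autoadjonction dont se sert la proposition~\ref{pran1} ; on ne peut donc appliquer la proposition~\ref{pranf} telle quelle. Mais la polynomialit\'e de $X$ est, par d\'efinition, celle de $\sigma(X)$ dans $\mathbf{Fct}(\A,\mathbb{E}(\A)-\mathbf{Mod})$, cat\'egorie dont la premi\`ere variable $\A$ est additive et dont le but $\mathbb{E}(\A)-\mathbf{Mod}$ est une cat\'egorie de Grothendieck avec assez de projectifs : la proposition~\ref{pranf} y est valide (remarque~\ref{rq-cbut}). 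Les morphismes $u_I\oplus id_x : a\oplus x\to a^{\oplus(d+1)}\oplus x$ \'etant bien des morphismes de $\A_\Gr$ de $(a\oplus x,x)$ vers $(a^{\oplus(d+1)}\oplus x,x)$ (ils pr\'eservent le sous-objet $x$), le foncteur $\sigma$ les transforme en les effets crois\'es usuels de la variable additive ; on \'etablira donc que $\sigma(Q)$ v\'erifie $(RM)^d$ pour tout $d$.

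\emph{La r\'etraction.} On d\'efinit $\xi : \theta\tau_d\sigma(Q)\to\theta\sigma(Q)$ comme aux th\'eor\`emes~\ref{thf-sco} et~\ref{th-djagl} : sa composante sur le facteur index\'e par un sous-objet $v$ de $a^{\oplus(d+1)}\oplus x$ ($v\not\leq x$) est induite par $p_{\{0\}}\oplus id_x$ lorsque $v$ est contenu dans la premi\`ere copie $a\oplus x$, et nulle sinon (le facteur but n'existant que si l'image de $v$ n'est pas contenue dans $x$). La naturalit\'e vis-\`a-vis des monomorphismes scind\'es de $\A$ vient de ce qu'ils induisent une injection sur les ensembles de sous-objets et respectent les images (on utilise que $\A$ poss\`ede des images) ; celle vis-\`a-vis des morphismes nuls est automatique, car l'image de tout sous-objet par un morphisme nul est le sous-objet nul, contenu dans $x$, d'o\`u la nullit\'e sur $\sigma(Q)$. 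Le calcul de $\xi\circ\theta((u_I)_*)$ donne alors l'identit\'e pour $I=\{0\}$ (puisque $p_{\{0\}}\circ u_{\{0\}}=id$) et $0$ pour $I\neq\{0\}$ : en effet, pour $u\not\leq x$, le sous-objet $(u_I\oplus id_x)(u)$ s'\'etale sur les copies index\'ees par $I$ et n'est donc contenu dans la premi\`ere copie que si $I=\{0\}$, ce qui fait tomber $\xi$ \`a z\'ero sinon. Ainsi $\xi$ est une r\'etraction (au signe pr\`es) de $\theta cr_d(\sigma Q)$, d'o\`u $(RM)^d$, puis ${\rm Ext}^*_{\mathbf{Fct}(\A,\mathbb{E}(\A)-\mathbf{Mod})}(\sigma X,\sigma Q)=0$ par la proposition~\ref{pranf}.

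\emph{Le point d\'elicat.} L'obstacle principal est de transf\'erer cette annulation vers ${\rm Ext}^*_{\A_\Gr-\mathbf{Mod}}(X,Q)$ : le foncteur $\sigma=s^*$ n'est pas essentiellement surjectif (il ne voit que les sous-objets facteurs directs) et l'autoadjonction de $\tau_d$ fait d\'efaut sur $\A_\Gr$. L'id\'ee est d'exploiter l'adjonction $s_!\dashv\sigma$ : comme $s_!$ envoie $P^{\A\times\mathbb{E}(\A)}_{(a,x)}$ sur $P^{\A_\Gr}_{(a\oplus x,x)}$ et que ${\rm Hom}_{\A_\Gr}(P^{\A_\Gr}_{(a\oplus x,x)},Q)=\sigma Q(a,x)$, il suffirait de disposer, pour une source polynomiale $X$, d'une r\'esolution par de tels projectifs induits pour que le complexe calculant ${\rm Ext}^*_{\A_\Gr}(X,Q)$ s'identifie \`a celui de l'extension transport\'ee. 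C'est l'obtention d'une telle r\'esolution --- autrement dit le fait que, pour une source polynomiale, les extensions sur $\A_\Gr$ ne d\'ependent que de la restriction $\sigma$, ce qui se substitue \`a l'autoadjonction absente --- qui constitue le c\oe ur technique de la preuve.
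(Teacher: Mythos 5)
Your first half is sound and essentially coincides with the paper's: the reduction to ${\rm Ext}^*_{\A_\Gr-\mathbf{Mod}}(X,Q)=0$, where $Q$ is the cokernel of $\nu(Y)\hookrightarrow\iota\omega(Y)$, and the verification that $\sigma(Q)$ satisfies $(RM)^d$ in $\B=\mathbf{Fct}(\A,\mathbb{E}(\A)-\mathbf{Mod})$ are exactly what the paper establishes, except that it packages your retraction $\xi$ differently: it introduces $Z\in {\rm Ob}\,\mathbf{Fct}(\A_\Gr,\mathbb{E}(\A)-\mathbf{Mod})$, with $Z(a,t)(u)$ the sum of the $Y(a\oplus u,v)$ over the $v\in\Gr(a\oplus u)$ whose image in $a$ is $t$, checks that $\lambda(Z)=\sigma\nu(Y)$ and $\omega(Z)=\sigma\iota\omega(Y)$, and invokes Theorem~\ref{th-djagl} with target category $\mathbb{E}(\A)-\mathbf{Mod}$ (licit by Remark~\ref{rq-cbut}). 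The genuine gap is the step you yourself flag as the technical heart: the passage from ${\rm Ext}^*_\B(F,\sigma Q)=0$ (for $F$ polynomial) to ${\rm Ext}^*_{\A_\Gr-\mathbf{Mod}}(X,Q)=0$ is not proved, and the route you sketch cannot work as stated. The induced projectives $s_!P^{\A\times\mathbb{E}(\A)}_{(a,x)}=P^{\A_\Gr}_{(a\oplus x,x)}$ do not generate $\A_\Gr-\mathbf{Mod}$: if $u$ is a subobject of $b$ which is not a direct summand, there exist nonzero functors $W$ with $W(a\oplus x,x)=0$ for every $(a,x)$ (a suitable quotient of $P^{\A_\Gr}_{(b,u)}$ supported on the isomorphism class of $(b,u)$ does the job), and such a $W$ is even polynomial in the sense of the theorem since $\sigma(W)=0$; so a polynomial source need not admit any resolution by the projectives you propose.

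The idea you are missing, and which is how the paper performs the transfer, is that $s$ admits a \emph{right} adjoint, namely $m:\A_\Gr\to\A\times\mathbb{E}(\A)$, $(a,t)\mapsto(a,t)$. Consequently $\sigma=s^*$ has a left adjoint much better behaved than $s_!$: the precomposition $\mu=m^*$, which is exact; and since its right adjoint $\sigma$ is exact, $\mu$ also preserves projectives, so the adjunction derives. Thus for $F\in{\rm Ob}\,\B$ polynomial, the map ${\rm Ext}^*_{\A_\Gr-\mathbf{Mod}}(\mu(F),\nu(Y))\to{\rm Ext}^*_{\A_\Gr-\mathbf{Mod}}(\mu(F),\iota\omega(Y))$ identifies with ${\rm Ext}^*_\B(F,\sigma\nu(Y))\to{\rm Ext}^*_\B(F,\sigma\iota\omega(Y))$, an isomorphism by the first half. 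For a general polynomial $X$, the paper then uses the comonad $\mu\sigma$: its counit $\mu\sigma(X)\to X$ comes from the morphisms $(a\oplus t,t)\to(a,t)$ of $\A_\Gr$; granted its surjectivity, one gets a simplicial exact resolution $\cdots\to(\mu\sigma)^2(X)\to\mu\sigma(X)\to X\to 0$ all of whose terms are of the form $\mu(F)$ with $F$ polynomial ($\mu$ and $\sigma$ preserve polynomiality), and comparing the two hyper-${\rm Ext}$ spectral sequences concludes. Note finally that your instinct that this transfer is the delicate point is accurate even with respect to the paper: a section of $(a\oplus t,t)\to(a,t)$ in $\A_\Gr$ exists precisely when $t\hookrightarrow a$ is split, so the surjectivity of the counit as justified there is unproblematic when every subobject of an object of $\A$ is a direct summand (for instance for vector spaces over a field, the application cited at the end), and requires additional care beyond that case.
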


\begin{proof}
 Montrons d'abord que le morphisme naturel
 $${\rm Ext}^*_\B(F,\sigma\nu(Y))\to {\rm Ext}^*_\B(F,\sigma\iota\omega(Y))$$
 est un isomorphisme lorsque $F\in {\rm Ob}\,\B$ est polynomial, où l'on a posé $\B=\mathbf{Fct}(\A,\mathbb{E}(\A)-\mathbf{Mod})$.
 
 Pour cela, on définit $Z\in {\rm Ob}\,\mathbf{Fct}(\A_\Gr,\mathbb{E}(\A)-\mathbf{Mod})$ par
 $$Z(a,t)(u)=\underset{Im\,(v\hookrightarrow a\oplus u\twoheadrightarrow a)=t}{\bigoplus_{v\in\Gr(a\oplus u)}}Y(a\oplus u,v),$$
 l'effet sur les morphismes étant défini comme pour $\nu$. On constate que $\lambda(Z)=\sigma\nu(Y)$ tandis que $\omega(Z)=\sigma\iota\omega(Y)$, de sorte que le théorème~\ref{th-djagl} implique notre assertion. Ici on a encore noté $\omega$ et $\lambda$ les variantes à $\mathbf{Fct}(\A_\Gr,\mathbb{E}(\A)-\mathbf{Mod})$ des foncteurs initialement définis sur $\mathbf{Fct}(\A_\Gr,\mathbf{Mod})$, ce qui n'affecte pas la validité du théorème~\ref{th-djagl} en vertu de la remarque~\ref{rq-cbut} (on peut aussi déduire directement le résultat dont on a besoin du théorème~\ref{th-djagl} sous sa forme minimale par un argument standard de suite spectrale).
 
 Le foncteur $s$ est adjoint à gauche au foncteur $m : \A_\Gr\to\A\times\mathbb{E}(\A)\quad (a,t)\mapsto (a,t)$, de sorte que le foncteur $\sigma$ est adjoint à droite à la précomposition $\mu$ par $m$. Par conséquent, ce que nous venons d'établir montre que
 $${\rm Ext}^*_{\A_\Gr-\mathbf{Mod}}(X,\nu(Y))\to {\rm Ext}^*_{\A_\Gr-\mathbf{Mod}}(X,\iota\omega(Y))$$
 est un isomorphisme pour $X=\mu(F)$, où $F\in {\rm Ob}\,\B$ est polynomial.
 
 On utilise alors les observations suivantes :
 \begin{enumerate}
 \item les foncteurs $\mu$ et $\sigma$ préservent les foncteurs polynomiaux ; 
 \item la coünité $\mu\sigma(X)\to X$ de l'adjonction est surjective (elle provient en effet des morphismes $(a\oplus t,t)\to (a,t)$ de $\A_\Gr$ de composantes l'identité de $a$ et l'inclusion de $t$ dans $a$ ; ce sont des épimorphismes {\em scindés}), ce qui permet de construire une résolution simpliciale exacte du type
 $$\cdots\to (\mu\sigma)^n(X)\to\dots\to(\mu\sigma)^2(X)\to\mu\sigma(X)\to X\to 0$$
 (cf. \cite{Dja}, §\,7.2 pour plus de détail sur cette résolution).
\end{enumerate}
La comparaison des suites spectrales associées par application de ${\rm Ext}^*(-,\nu(Y))$ et ${\rm Ext}^*(-,\iota\omega(Y))$ permet donc de déduire le cas général requis du cas où $X$ est dans l'image du foncteur $\mu$ précédemment traité.
\end{proof}

Le cas où $\A$ est la catégorie des espaces vectoriels de dimension finie sur un corps fini est le théorème~10.2.1 de \cite{Dja}.
\bibliographystyle{smfalpha}
\bibliography{biblisco}
\end{document}